\title{A Generic Polynomial for the Alternating Group $A_5$}
\author{Gene Ward Smith}
\date{}
\newtheorem{theorem}{Theorem}
\begin{document}

\maketitle

\begin{abstract}
The methods of classical invariant theory are used to construct generic
polynomials for groups $S_5$ and $A_5$, along with explicit reductions 
to specializations of the generic polynomials defining any desired field
extension with those groups.
\end{abstract}

\section{The classical invariant theory of binary forms}

The classical invariant theory of binary forms explores the invariants under the
action of ${\mbox{SL}_2}(K)$ in a field $K$ of characteristic 0, but we will 
extend that to consideration of characteristics other than 2, 3, or 5. Initially,
however, it will do no harm to think of $K$ as
$\mathbb{Q}$ or a field of rational functions $\mathbb{Q}$$(t_1, \cdots, t_m)$. 
If a binary form of homogenous degree $n$ in $x$ and $y$ has
a nonzero coefficient for $x^n$, then by setting $y$ to 1, we obtain a polynomial in $x$
of degree $n$. Substituting $x/y$ for $x$ and multiplying by $y^n$ gives the form
back again. Hence the invariant theory of binary forms is also an invariant theory for
polynomials in one variable.

Classical invariant theory actually explores a larger class of invariants, which include 
the {\em covariants}. A covariant of a binary form $f$ is a polynomial of homogenous
degree $r$ (called the order) in $x$ and $y$, with coefficients which are of degree $s$
in the coefficients of $f$, which is invariant in a particular sense under ${\mbox{SL}_2}(K)$. 
The invariants themselves are then the covariants order 0.

If $M = \left(\begin{smallmatrix} a&b\\ c&d \end{smallmatrix} \right)$ is a matrix with determinant 1,
and $g$ a form, then $M(g)$ is the form obtained by substituting $x \mapsto ax+by$, $y \mapsto cx+dy$
for $x$ and $y$. Let $C(f)$ be a function from forms of order $n$ to forms of order $r$, 
whose coefficients are defined
by means of homogenous polynomial functions of degree $s$ in the coefficients of $f$. Then $C(f)$
is a covariant of $f$ if $C$ and $M$ commute, so that $C(M(f)) = M(C(f))$.

Two important special cases of this are when $C(M(f)) = C(f)$ and when $C(f) = f$. In the first case,
$C(f)$ is a form of order 0, and is a true invariant, whose value whether computed from the original or the
transformed coefficients is the same. In the second case, $C(f)$ is the identity map, and
so $C(M(f)) = M(C(f)) = M(f)$; this expresses the fact that $f$ itself is a covariant of order $n$ and
degree 1.

The covariants form a finitely-generated bigraded algebra, whose generators may be found by the operation
of {\rm transvection}. In this era of computer algebra, transvectants are easily computed via Cayley's
$\Omega$-process. This process proceeds by the following steps, starting from two forms $f$ and $g$ in
the variables $x$ and $y$:

\begin{enumerate}

\item
Substitute $x \mapsto x_1, y \mapsto y_1$ in $f$ and $x \mapsto x_2, y \mapsto y_2$ in $g$.

\item
Use the result to define a function $w(x_1, y_1, x_2, y_2) = f(x_1, y_1)g(x_2, y_2)$ in four variables.

\item
Apply the differential operator $\Omega = \frac{\partial^2}{\partial x_1 \partial y_2} -  \frac{\partial^2}{\partial x_2 \partial y_1}$ $m$ times to w. This means that $m$ times in succession, perform the substitution
\[w \mapsto \frac{\partial^2 w}{\partial x_1 \partial y_2} - \frac{\partial^2 w}{\partial x_2 \partial y_1}. \]

\item
Substitute $x_1 \mapsto x$, $x_2 \mapsto x$, $y_1 \mapsto y$, $y_2 \mapsto y$ in the result.

\item
The end result of Cayley's $\Omega$-process is the $m$-th transvectant of $f$ and $g$, written $(f, g)^m$.

\end{enumerate}

\section{The quintic}
The algebra of covariants of the binary quintic has 23 generators, but it is not necessary for us to
consider all of them. If $f = a_0x^5+a_1x^4y+a_2x^3y^2+a_3x^2y^3+a_4xy^4+a_5y^5$ with indeterminate
coefficients, then the Cayley $\Omega$ process staring from $f$ will produce polynomials in $x$ and
$y$ and the indeterminates with integral numerical coefficients. We can divide out the content (the GCD
of all the numerical factors) and obtain a reduced covariant with relatively prime integral coefficients.
We obtain in this way the following covariants:

\begin{itemize}

\item

The original form $f$ of order 5 and degree 1

\item

$i = (f, f)^4/288$ of order 2 and degree 2

\item

$H = (f, f)^2/16$ of order 6 and degree 2

\item

$j = -(f, i)^2/12$ of order 3 and degree 3

\item

$A = (i, i)^2/32$ of order 0 and degree 4

\item

$k = (i, H)^2/12$ of order 4 and degree 4

\item

$\tau = (j, j)^2/16$ of order 2 and degree 6

\item

$B = (\tau, i)^2/8$ of order 0 and degree 8

\item

$\Delta = (A^2-4B)/125$ of order 0 and degree 8

\item

$C = (\tau, \tau)^2/6$ of order 0 and degree 12

\item

$M = (-9C + 2000A\Delta + 1008A^3)/25$ of order 0 and degree 12

\end{itemize}

These covariants, aside from a numerical factor, are the same as the ones with
the same names in \cite[Grace and Young] {GY}. However, the covariant of order and degree 4 is
not there given a name, so here it is called $k$, to go along with $i$ and $j$, of order
and degree both 2 and both 3 respectively. The ring of invariants can be generated
by $A$, $B$, and $C$ and an invariant of degree 18 which we don't require; however
it can equally well be given in terms of $A$, $\Delta$ and $M$, and this turns 
out to be more useful for our purpose. Particular note is drawn to the fact that $\Delta$
is the discriminant.

\section{Generic polynomials}

As we noted in the introduction, the form $f$ in $x$ and $y$ with indeterminate
coefficients $a_0, \cdots, a_5$ corresponds via $y \mapsto 1$ to a polynomial
in $x$. In just the same way, the covariants of $f$ may be converted to corresponding
polynomials. If we do this, $i$ becomes a polynomial of degree 2 in $x$, and $k$
a polynomial of degree 4. We may then apply the Tschirnhausen transformation $z = k/i^2$
and obtain a polynomial of degree 5 in $z$ which gives the same field extension of
$\mathbb{Q}$$(a_0, \cdots, a_5)$. The coefficients of this polynomial are all homogenous of
degree 24 in $a_0, \cdots, a_5$, which means they could be invariants of degree 24.
If that were to be the case, they would be expressible in terms of $A$, $\Delta$ and $M$,
since an invariant of degree 18 can play no role. By direct computation we find that in fact,
the polynomial satisfied by $z$ does have invariants of degree 24 for coefficients, and
is equal to the following polynomial.

\begin{multline}
288M^2z^5+(279890625\Delta^2 A^2+262154475\Delta^4-3666000MA\Delta-\\
2041200MA^3+59541075A^6+87890625\Delta^3+14880M^2)z^4+\\
(3711849300\Delta^4+6170437500\Delta^2 A^2-83428000MA\Delta-23781600MA^3+\\
538658100A^6-351562500\Delta^3+259520M^2)z^3+\\
(15376579650\Delta^4+22131843750\Delta^2 A^2-372984000MA\Delta-\\
130420800MA^3+2685964050A^6+527343750\Delta^3+1583040M^2)z^2+\\
(9952607700\Delta A^4+15161437500\Delta^2 A^2-243612000MA\Delta-\\
79322400MA^3+1619910900A^6-351562500\Delta^3+971040M^2)z-\\
42806000MA\Delta+1743266475\Delta A^4+2912390625\Delta^2 A^2+\\
157216M^2-12805200MA^3+260745075A^6+87890625\Delta^3
\end{multline}

We can rewrite this in terms of a polynomial of degree two in $M$; if we do that we find that the
$M^2$ term is $32 (3z+1)^2 (z+17)^3 M^2$. This suggests replacing $z$ with $(u-1)/3$. 
We might also consider that if we divide the polynomial by $A^6$, we can replace
a polynomial over three indeterminates with one over two: the absolute
invariants (\cite[Elliott]{E}) $\Delta/A^2$ and $M/A^3$, where an
invariant is called ``absolute'' if it is invariant under ${\mbox{GL}_2}(K)$. In place of these,
we will use instead $\delta = 25\Delta/A^2$ and $q = A^3/(8M)$, with an eye to the simplicity of the resulting
polynomial. Performing both of these substitutions, we obtain the following.

\begin{multline}
u^2 (u+50)^3-\\
20 (611  \delta u^3+8505 u^3+39270  \delta u^2+263250 u^2+4050000 u+438000  \delta u+100000  \delta) u q+\\
2 (25+3  \delta) (625  \delta^2 u^4+44550  \delta u^4+793881 u^4+18370800 u^3+3175200  \delta u^3-10000  \delta^2 u^3+\\
262440000 u^2+60000  \delta^2 u^2+25336800  \delta u^2-160000  \delta^2 u+12960000  \delta u+160000  \delta^2) q^2
\end{multline}

By a {\em generic polynomial} outside $S$ with group $G$, where $S$ is a finite set of primes and $G$ a finite group,
is meant a polynomial $P$ in $x$ with coefficients in $\mathbb{Q}$$(t_1, \cdots, t_m)$ such that

\begin{enumerate}

\item

The Galois group of the splitting field for $P$ over $\mathbb{Q}$$(t_1, \cdots, t_m)$ is $G$.

\item

For $p \notin S$, if $P$ is reduced
modulo $p$, the result is defined, separable and irreducible and the
Galois group of the splitting field for $P$ over $\mathbb{F}_p$$(t_1, \cdots, t_m)$ is $G$.

\item

If $K$ is a field of characteristic 0, and $L/K$ is a Galois extension with group $G$,
then $L$ is the splitting field of a polynomial obtained by specializing $(t_1, \cdots, t_m)$ to values
$(\tau_1, \cdots, \tau_m)$ in $K$.

\item

If $K$ is a field of finite characteristic $p \notin S$, and $L/K$ is a Galois extension with group $G$,
then $L$ is the splitting field of a polynomial obtained by reducing $P$ mod $p$ and then
specializing $(t_1, \cdots, t_m)$ to values $(\tau_1, \cdots, \tau_m)$ in $K$.

\end{enumerate}

\begin{theorem}
The two polynomials constructed above are generic for the group $S_5$ outside ${2, 3, 5}$.
\end{theorem}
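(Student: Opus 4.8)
\emph{Strategy.} The plan is to bootstrap from one classical fact — that a generic quintic has Galois group $S_5$ over the field its coefficients generate, over any ground field — and then to push that group down through the nested parameter fields using only the translation theorem of Galois theory together with the rationality of the invariant and absolute-invariant fields of the binary quintic. Conditions (1) and (2) will fall out of this descent; conditions (3) and (4) require a separate surjectivity argument, which I expect to be the real work.

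\emph{The group over the coefficient field.} First I would set $K_0=\mathbb{Q}(a_0,\dots,a_5)$ and record that, $f(x,1)$ being a generic quintic, $\mathrm{Gal}(f/K_0)=S_5$ — and likewise over $\mathbb{F}_p(a_0,\dots,a_5)$, the proof (fixed field of the symmetric group acting on the roots) being characteristic-free. The Tschirnhausen substitution $z=k(x,1)/i(x,1)^2$ is a fixed rational function $R\in K_0(x)$, so the roots of the first displayed polynomial $g$ are $z_j=R(\alpha_j)$, the $\alpha_j$ being the roots of $f(x,1)$. A direct computation shows $g$ is separable over $K_0$ (its discriminant being $\Delta$ times a nonzero invariant factor), so the $z_j$ are distinct; since each $\sigma\in\mathrm{Gal}(f/K_0)=S_5$ sends $z_j=R(\alpha_j)$ to $R(\sigma\alpha_j)$, the $S_5$-action on $\{z_j\}$ is conjugate to that on $\{\alpha_j\}$, hence transitive, so $g$ is irreducible over $K_0$. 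The splitting field of $g$ sits inside that of $f$, so it is the fixed field of a normal $N\trianglelefteq S_5$; as $S_5/N$ acts faithfully and transitively on the five roots of $g$, we get $N=1$, i.e.\ $\mathrm{Gal}(g/K_0)=S_5$, and the same over $\mathbb{F}_p(a_0,\dots,a_5)$.

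\emph{Descent and reduction mod $p$.} Next, the displayed computation identifies $g$ with the first polynomial and shows its coefficients lie in $K_1=\mathbb{Q}(A,\Delta,M)=\mathbb{Q}(A,B,C)\subseteq K_0$. By the translation theorem, for any subfield $F'$ of $K_0$ containing these coefficients the restriction map embeds $\mathrm{Gal}(g/K_0)$ into $\mathrm{Gal}(g/F')$; with $F'=K_1$ this forces $\mathrm{Gal}(g/K_1)=S_5$, and since $K_1$ is purely transcendental of transcendence degree $3$ over $\mathbb{Q}$ (the invariant ring being the hypersurface $\mathbb{Q}[A,B,C,I_{18}]/(I_{18}^2-\Phi)$, so $A,B,C$ are algebraically independent) this is condition (1) for the first polynomial, $m=3$. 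Passing to the second polynomial via $z=(u-1)/3$ and division by the nonzero scalar $A^6$ preserves the splitting field over any field containing the coefficients, which now lie in $K_2=\mathbb{Q}(\delta,q)\subseteq K_1$; here $\delta,q$ are invertible functions of Elliott's two fundamental absolute invariants and hence algebraically independent over $\mathbb{Q}$, so $K_2$ is purely transcendental of degree $2$ and the same embedding argument with $F'=K_2$ gives condition (1) for the second polynomial, $m=2$. Every denominator that appears — the content divisors $288,16,12,32,8,6$ of the transvectants, the $125$ and $25$ inside $\Delta$ and $M$, the $288M^2$ that monicizes $g$, and the $A^6$ — is, at each $p\notin\{2,3,5\}$, a unit times a power of the invariants $A$ and $M$, which are nonzero in $\mathbb{F}_p(a_0,\dots,a_5)$; so the whole construction reduces verbatim modulo such $p$, and rerunning the two previous steps over $\mathbb{F}_p$ establishes the "defined, separable, irreducible, with group $S_5$ over $\mathbb{F}_p(t_1,\dots,t_m)$" parts of conditions (2) and (4).

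\emph{Surjectivity --- the main obstacle.} Finally, conditions (3) and (4) proper: every $S_5$-extension $L/K$ ($K$ of characteristic $0$, or of characteristic $p\notin\{2,3,5\}$) must be the splitting field of a specialization. I would take $h\in K[x]$ with $\mathrm{Gal}(h/K)=S_5$ and splitting field $L$, and, after a preliminary $K$-rational Tschirnhausen transformation, assume $h$ lies in the open $\mathrm{GL}_2$-orbit type, so $A(h),M(h)\neq 0$ (and $\Delta(h)\neq 0$ automatically). Then $z=k(h)/i(h)^2$ gives a quintic with splitting field $L$ whose coefficients are the invariants of $h$, so specializing $(A,\Delta,M)$ --- respectively $(\delta,q)$ --- to the invariant values of $h$ produces a quintic with the same invariants, which by the classical theory of the binary quintic is $\mathrm{GL}_2(K)$-equivalent to $h$ and hence shares the splitting field $L$. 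The main obstacle is to make this effective and uniform over an arbitrary base field: to write down explicit $K$-rational formulas for the specialized parameters in terms of the coefficients of $h$ and to verify the specialized polynomial's splitting field is exactly $L$ --- which rests on the $\mathrm{GL}_2$-quotient of the space of quintics being rational with a section over the open orbit (the stabilizer there being trivial) --- and to dispose of the quintics outside the open orbit ($i=0$, $A=0$, or $M=0$), showing they are removable by a further Tschirnhausen transformation without changing the splitting field, or else cannot carry the full group $S_5$. The $A_5$ analogue would in addition need a criterion for when $\sqrt{\Delta}$ lies in the base field.
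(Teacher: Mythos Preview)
Your first two steps are correct and in fact more explicit than the paper's treatment: you justify carefully, via the translation theorem, that the Galois group survives passage from $K_0=\mathbb{Q}(a_0,\dots,a_5)$ down to $K_1=\mathbb{Q}(A,\Delta,M)$ and then to $K_2=\mathbb{Q}(\delta,q)$, whereas the paper simply asserts that the transformed polynomials ``have the same Galois group over their base field as $f$.'' Your bookkeeping of the denominators for reduction mod $p$ is likewise sound.

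The gap is in your surjectivity step. You correctly observe that applying the Tschirnhausen $z=k(h)/i(h)^2$ to a given quintic $h\in K[x]$ produces a quintic in $z$ with splitting field $L$, and that its coefficients are polynomials in $A(h),\Delta(h),M(h)$. But that quintic \emph{is} polynomial~(1) specialized at $(A,\Delta,M)=(A(h),\Delta(h),M(h))$: the construction of~(1) from $f$ is polynomial in $a_0,\dots,a_5$ and hence commutes with specialization, and the coefficients of~(1) are already expressed as polynomials in $A,\Delta,M$. So the argument is finished at that point --- the specialized polynomial and the Tschirnhausen of $h$ are literally the same element of $K[z]$. This is precisely the paper's one-line argument: ``polynomials (1) and (2) above result from a Tschirnhausen transformation of $g$, and hence give the same extension.'' Your detour through ``same invariants $\Rightarrow$ $\mathrm{GL}_2(K)$-equivalent to $h$ $\Rightarrow$ same splitting field'' is both unnecessary and unsound over a general base: equal invariants only force $\mathrm{GL}_2(\bar K)$-equivalence, not $\mathrm{GL}_2(K)$-equivalence (the obstruction is a twist, governed by Galois cohomology of the stabilizer). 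Consequently the ``main obstacle'' you isolate --- rationality of the $\mathrm{GL}_2$-quotient together with a $K$-rational section over the open orbit --- is a red herring; no moduli-theoretic input is required, and the ``explicit $K$-rational formulas'' you worry about are just the polynomials $A(h),\Delta(h),M(h)$ in the coefficients of $h$.

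What genuinely remains to check is that the specialized polynomial has degree $5$ and is separable, which is exactly where the hypotheses $M(h)\ne 0$, $A(h)\ne 0$ and the preliminary Tschirnhausen come in; on that point your sketch and the paper agree.
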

\begin{proof}
Let us first observe that by \cite[Maeda]{M}, we may treat characteristic $p \neq 2$ uniformly with characteristic 0, and
also that the separability of both polynomials is obvious.

So long as the characteristic is not 2, 3, or 5, the two polynomials above have non-zero discriminant and hence no
repeated roots. 
Since these roots result from a Tschirnhausen transformation, the polynomials have the same Galois group over their
base field as $f$ does over $a_0, \cdots, a_5$, namely $S_5$. In the
other direction, if we start with a polynomial of $g$ degree five with coefficients in $K$ whose roots define $L/K$
with group $G$, then so long as $A$ and $M$ are not zero, the polynomials (1) and (2) above result from a
Tschirnhausen transformation of $g$, and hence give the same extension. 

If either $A$ or $M$ is zero for $g$, we may first apply a preliminary Tschirnhausen transformation and obtain
a polynomial $\tilde{g}$ and obtain $\tilde{A}$ and $\tilde{M}$
which are not zero. An invariant of a polynomial $g$ of order $n$ 
which is of degree $d$ is of weight $w = nd/2$ in the roots of $g$, meaning it is a homogenous function of degree
$w$ in the roots. Hence $A$ may be written as a homogenous polynomial of degree 10 and $M$ of degree 30 in the
roots of $g$. Using resultants, we can eliminate all but one of the roots with respect to $g$, and obtain a polynomial
which must be nonzero for any root of a replacement polynomial $\tilde{g}$ for $g$. These roots which the roots of
$\tilde{g}$ must avoid will lie in $L$,
but there is no reason they should lie in a stem field given by a root of $g$; however even if this should
be so there are only finitely many such roots and hence they are easily avoided. Therefore a suitable $\tilde{g}$
may always be found, in which case its invariants can be used to reduce the extension $L/K$ to one
parametrized by (1) or (2).
\end{proof}

By a theorem of \cite[Kemper]{K}, any polynomial which is generic satisfies the seemingly stronger condition that it
also parameterizes all Galois extensions with groups which are subgroups of $G$. Hence the above polynomials also 
provide the extensions of group $A_5$, just as the polynomial $f$ with indeterminate coefficients $a_0, \cdots, a_5$
will. But they also do something more, which is much more interesting.

\begin{theorem}
If we substitute $\Delta \mapsto D^2$ in the first polynomial, or $\delta \mapsto d^2$
in the second, we obtain a polynomial generic for the group $A_5$ over $\mathbb{Q}$.
\end{theorem}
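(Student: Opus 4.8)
The plan is to run essentially the argument of the previous theorem, the one new ingredient being that $A_5$ is exactly the subgroup of $S_5$ fixing a square root of the discriminant, together with the fact stressed when the covariants were introduced, that $\Delta$ is the discriminant of $f$. Concretely, I will check conditions (1) and (3) of the definition of a generic polynomial for the two substituted polynomials, whose base fields are $\mathbb{Q}(A,D,M)$ in the first case and $\mathbb{Q}(d,q)$ in the second, both purely transcendental over $\mathbb{Q}$ in three and in two variables respectively. Conditions (2) and (4) can then be folded in for characteristics other than $2$, $3$, $5$ exactly as before: by Maeda's theorem we may treat characteristic $p\neq 2$ uniformly with characteristic $0$, and for $p\neq 2$ the subgroup $A_5$ of $S_5$ is still the kernel of the sign, which is detected by the discriminant being a square.

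First I would establish the one algebraic fact on which everything rests: the discriminant of polynomial (1), viewed as an element of $\mathbb{Q}(A,\Delta,M)$, is $\Delta$ times a square, and the discriminant of polynomial (2), viewed as an element of $\mathbb{Q}(\delta,q)$, is $\delta$ times a square. That something like this must hold is clear from the construction: polynomials (1) and (2) are obtained from $f$ by the Tschirnhausen substitution $z=k/i^2$, followed only by the affine change $z=(u-1)/3$, division by the constant $A^6$, and the renaming $\delta=25\Delta/A^2$, $q=A^3/(8M)$; a Tschirnhausen substitution multiplies the discriminant by the square of a rational function of the roots that is symmetric under the Galois action, each of the remaining operations multiplies it by a transparent square, and $\Delta$ is the discriminant of $f$. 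What is not purely formal is the claim that the accumulated square factor, once everything is rewritten in the invariant coordinates, genuinely lies in the base field and not merely in a quadratic extension of it; this I would settle by computing the discriminant of polynomial (2) (equivalently of (1)) directly, in the same spirit as the explicit coefficient computations already carried out. I expect this verification to be the main point of the proof and the step requiring the most care.

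Granting it, condition (1) is immediate. Substituting $\Delta\mapsto D^2$ in polynomial (1), or $\delta\mapsto d^2$ in polynomial (2), turns the discriminant into a perfect square in the base field $\mathbb{Q}(A,D,M)$, respectively $\mathbb{Q}(d,q)$, so the Galois group of the splitting field is contained in $A_5$. But $\mathbb{Q}(A,D,M)/\mathbb{Q}(A,\Delta,M)$ and $\mathbb{Q}(d,q)/\mathbb{Q}(\delta,q)$ are quadratic, and over the smaller field the Galois group is all of $S_5$ by the theorem just proved; hence over the larger field it has index at most $2$ in $S_5$, and, being contained in $A_5$, it equals $A_5$.

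For condition (3), let $K$ be of characteristic $0$ and $L/K$ Galois with group $A_5$. Since $A_5$ is simple and has a subgroup of index $5$, there is a degree-$5$ subfield $F=K(\theta)$ of $L$ such that the minimal polynomial $g\in K[x]$ of $\theta$ is an irreducible quintic with splitting field $L$ and with Galois group, as a permutation group on its roots, equal to $A_5\subseteq S_5$; consequently the invariant $\Delta(g)$, being the discriminant of $g$, is a square in $K$, say $\Delta(g)=D_0^2$. I would then repeat the reduction of the previous proof: after replacing $g$ if necessary by a preliminary Tschirnhausen transform, which leaves $L$, hence the group $A_5$, hence the squareness of the discriminant, unchanged, one may take $A(g)$ and $M(g)$ nonzero, whereupon $z=k/i^2$ carries $g$ to the specialization of polynomial (1) at $A=A(g)$, $\Delta=D_0^2$, $M=M(g)$, equivalently to the specialization of polynomial (2) at $\delta=(5D_0/A(g))^2$, $q=A(g)^3/(8M(g))$, and this specialized polynomial has splitting field $L$. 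As $D_0\in K$, it is a specialization of the polynomial obtained by the substitution $\Delta\mapsto D^2$, respectively $\delta\mapsto d^2$, which is what condition (3) asks for. Hence both substituted polynomials are generic for $A_5$.
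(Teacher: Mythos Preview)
Your proof is correct and follows essentially the same route as the paper's: the key computation that the discriminant of polynomial~(1) is a square times $\Delta$ (and of~(2) a square times $\delta$), so that the substitution forces the Galois group into $A_5$, followed by the reduction of an arbitrary $A_5$ extension via the Tschirnhausen procedure of Theorem~1, noting that $\Delta(g)$ is already a square in $K$. The one place you differ is in pinning the generic Galois group down to \emph{exactly} $A_5$: the paper invokes specialization (any $A_5$ extension arises this way, so the generic group cannot be smaller) together with a Maple/linear-resolvent verification, whereas you use the cleaner tower argument that $\mathbb{Q}(d,q)/\mathbb{Q}(\delta,q)$ is quadratic and the group over the bottom is $S_5$; both are valid, and yours avoids the external computation.
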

\begin{proof}
In both cases, the discriminants of the polynomials are squares, as the discriminant of polynomial (1) is a square times $\Delta$,
and of polynomial (2) a square times $\delta$. Since the characteristic is not 2, this means that the Galois group
cannot be $S_5$ and must be contained in $A_5$. Since we can reduce any $A_5$ extension to these forms, the
Galois group must be exactly $A_5$. Moreover in characteristic 0, we may invoke
Maple's ``galois'' function, which can compute Galois groups 
of function field extensions, and compute the Galois group directly; in characteristic $p$, the discriminant 
and the linear resolvent from the sum of two roots suffices. Put succinctly, 
the explicit procedure to reduce any
$A_5$ extension outside of ${2, 3, 5}$ to one of these polynomials plus the fact that the
discriminant is a square shows they are generic.
\end{proof}

\section{Examples}

Suppose our starting polynomial is $x^5-2x^4-10x^3+23x^2-6x-4$, which has Galois group $A_5$. Computing invariants, we find
that $\delta = 25\cdot 72352036/110578^2 = 25/169 = (5/13)^2$, a square. Also, $q = 110578^3/(8\cdot 55159285100995067) = 9343841/3049494563$. 
Substituting these values into polynomial (2), or $d = 5/13$ into the generic $A_5$ polynomial derived from it, we obtain

\begin{multline}
u^5+53018481246319976950/9299417089766560969 u^4+\\
118978291635920447500/9299417089766560969 u^3+\\
131644992415533125000/9299417089766560969 u^2+\\
71941446489050000000/9299417089766560969 u+\\
15555687740000000000/9299417089766560969.
\end{multline}

This we can verify gives the same extension as a root of our original polynomial by factoring
it over the extension field.

Now let us look instead at $x^5+25x^4-x-1$. This has Galois group $S_5$, but $A=0$, and therefore $\delta$ cannot be computed.
However, the polynomial satisfied by the square of the roots, $x^5-625x^4-2x^3+50x^2+x-1$, has $A = 1247920128$, 
$\Delta = -833155976134656$ and $M = 78714822656850046410962239488$, so that $\delta = -2554525/190992984$ and 
$q = 1396546810783488/452524020352953001$. Once again, we may verify we have the same field by factoring over the
extension given by a root.

For a final example, consider $x^5-10c x^3+45c^2 x-c^2$, known as Brioschi normal form. This has absolute invariants
$\delta = 1/5$ and $q = \frac{25}{8192} (1728c-1)/(1764c-1)$. In characteristic 11, we may write the polynomial as
 $x^5+c x^3+c^2 x-c^2$, with absolute invariants $\delta = d^2 = 9$ and $q = 5 (c-1)/(2c+5)$. Since $\delta = d^2$
is a square, we may substitute $d=3$ and $q = 5 (c-1)/(2c+5)$ into the generic $A_5$ polynomial, obtaining

\begin{multline}
u^5+(5 c^2+3 c+5)/(2 c+5)^2 u^4-(3 c^2+3 c+4)/(2 c+5)^2 u^3-\\
(4 c+5)/(2 c+5)^2 u^2-2 (c-5) (c-1)/(2 c+5)^2 u+(3+3 c^2+5 c)/(2 c+5)^2
\end{multline}

We now may verify that both the original and generic parametrized polynomials have
square discriminant, $c^8 (c-1)^2$ and $2^4 (c-1)^4 c^4 (c^3+c^2-5c-1)^2$
respectively, and that the sum of two distinct roots for either satisfies an
irreducible polynomial of degree 10, so that both have $A_5$ as a Galois group. Again,
as before, we can verify their equivalence as extensions of $\mathbb{F}_{11}(c)$
by factoring.

\section{Historical note}
These polynomials and others like them were found by the author in 1987, immediately upon
reading \cite[Noether's Problem for $A_5$]{M} in preprint. It is presented now in the
spirit of better late than never.


\begin{thebibliography}{99}

\bibitem{E}

``An Introduction to the Algebra of Quantics'',
Elliott, Edwin Bailey,
Oxford University Press, second edition 1913

\bibitem{GY}

``The Algebra of Invariants'',
Grace, John Hilton, and Young, Alfred,
The Cambridge University Press, 1903

\bibitem{K}

``Generic Polynomials are Descent-Generic'',
Kemper, Gregor,
Manuscripta Math., 105 (2001), pp. 139 to 141

\bibitem{M}

``Noether's Problem for $A_5$'',
Maeda, Takashi,
Journal of Algebra, 125 (1989), pp. 418 to 430

\end{thebibliography}
\end{document}